\documentclass[10pt]{article}
\textwidth= 5.00in
\textheight= 7.4in
\topmargin = 30pt
\evensidemargin=0pt
\oddsidemargin=55pt
\headsep=17pt
\parskip=.5pt
\parindent=12pt
\font\smallit=cmti10

\usepackage{amssymb,latexsym,amsmath,epsfig,amsthm} 
\usepackage{url}

\makeatletter

\renewcommand\section{\@startsection {section}{1}{\z@}
	{-30pt \@plus -1ex \@minus -.2ex}
	{2.3ex \@plus.2ex}
	{\normalfont\normalsize\bfseries\boldmath}}

\renewcommand\subsection{\@startsection{subsection}{2}{\z@}
	{-3.25ex\@plus -1ex \@minus -.2ex}
	{1.5ex \@plus .2ex}
	{\normalfont\normalsize\bfseries\boldmath}}

\renewcommand{\@seccntformat}[1]{\csname the#1\endcsname. }

\makeatother

\newtheorem{theorem}{Theorem}

\newtheorem{example}[theorem]{Example}

\theoremstyle{definition}

\newtheorem*{remark*}{Remark}


\newcommand{\Mod}[1]{\ (\mathrm{mod}\ #1)}
\newcommand{\Out}[1]{\mathrm{Out}(#1)}

\usepackage{xparse}   
\NewDocumentCommand{\ceil}{s O{} m}{%
	\IfBooleanTF{#1} 
	{\left\lceil#3\right\rceil} 
	{#2\lceil#3#2\rceil} 
}

\NewDocumentCommand{\floor}{s O{} m}{%
	\IfBooleanTF{#1} 
	{\left\lfloor#3\right\rfloor} 
	{#2\lfloor#3#2\rfloor} 
}

\begin{document}
	\begin{center}
		\uppercase{\bf Directed graphs of integers with arcs determined by an arithmetic function}
		\vskip 20pt
		{\bf Phakhinkon Napp Phunphayap}\\
		{\smallit Department of Mathematics, Faculty of Science,
			Burapha University, \\Chonburi, 20131, Thailand}\\
		{\tt phakhinkon.ph@go.buu.ac.th}, {\tt phakhinkon@gmail.com}\\
		\vskip 10pt
		{\bf Passawan Noppakaew\footnote{Passawan Noppakaew is the corresponding author.}}\\
		{\smallit Department of Mathematics, Faculty of Science, Silpakorn University, \\Nakhon Pathom, 73000, Thailand}\\
		{\tt noppakaew\_p@silpakorn.edu}\\
		\vskip 10pt
		{\bf Prapanpong Pongsriiam}\\
		{\smallit Department of Mathematics, Faculty of Science, Silpakorn University, \\Nakhon Pathom, 73000, Thailand\\
		\medskip
		and\\
		\medskip
		Graduate School of Mathematics, Nagoya University\\ Nagoya, 464-8602, Japan}\\
		{\tt pongsriiam\_p@silpakorn.edu}, {\tt prapanpong@gmail.com}
	\end{center}
	\vskip 20pt
	\centerline{\smallit Received: , Revised: , Accepted: , Published: } 
	\vskip 30pt
			
	\centerline{\bf Abstract}
	\noindent
	We introduce a new arc in directed graphs of integers. Among other things, we determine the positive integers that have arcs to all except a finite number of positive integers. We also propose some possible research problems at the end of this article.
	
	\pagestyle{myheadings}
	\thispagestyle{empty}
	\baselineskip=12.875pt
	\vskip 30pt
	
	\section{Introduction}
	A graph labelling is a problem in graph theory concerning an assignment of integers to vertices and/or edges of a graph under some certain conditions. For an up-to-date review of graph labelling, we refer the reader to Gallian \cite{Gallian}. Coprime graph of integers, which is the graph whose vertex set is the set of integers and two vertices $a$ and $b$ are connected by an edge if and only if $\gcd(a,b)=1$, provides an interesting  connection between graph theory and number theory, and many of their properties have been continuously studied by several mathematicians; see for example in the work of Erd\H{o}s \cite{Erdos1}, Erd\H{o}s and S\'{a}rk\"{o}zy \cite{Erdos2}, S\'{a}rk\"{o}zy \cite{Sarkozy}, Viadya and Prajapati \cite{Vaidya}, Berliner et al. \cite{Berliner}, Lee \cite{Lee}, and Berkove and Brilleslyper \cite{Berkove}. The study on graph labelling, coprime graphs, and a problem in TMO 2022 motivate us to introduce a directed graph whose vertices are positive integers and directed edges or arcs are determined by arithmetic functions. 
	
	Let $g$ be an arithmetic function and $n,u$ positive integers. We say that there exists a $g$-directed edge or $g$-arc from $n$ to $u$ if there exists a positive integer $N$ such that $N$ is divisible by $n$ and $g(N)=u$. In this case, we also say that there exists a $g$-arc from $n$ to $u$ and write $n \xrightarrow[]{g} u$. If there is no confusion and the function $g$ is understood, we sometimes drop the letter $g$ and write only $n\rightarrow u$ and say that there exists an arc from $n$ to $u$. We can think of this as a directed graph $G=(V,E_{g})$ where $V=\mathbb{N}$ and there is an arc from $n$ to $u$ if $n\xrightarrow{g}u$.
	
	Our purpose is to obtain some basic properties of this new arc between integers and find the integers $n$ that have $g$-arcs to as many integers as possible. So we let $\Out{g,n}$ be the set of all $u\in\mathbb{N}$ to which there exists a $g$-arc from $n$, that is, 
	\[\Out{g,n}=\{u\in\mathbb{N}\mid n\xrightarrow{g}u\},\]
	and we will determine the integers $n$ such that $\Out{g,n}$ is a cofinite subset of $\mathbb{N}$. The set $\text{In}(g,n)$ of $u\in\mathbb{N}$ such that $u\xrightarrow{g}n$ may be interesting too, but we postpone the investigation of these problems as a future project; see Questions 5 and 7 at the end of this article. Please see also Questions 6, 8, 9, 10 for some variations and different problems.
	
	To avoid some complications or trivialities, we restrict our attention to the $g$-arc where $g$ is an arithmetic function that is surjective or has the range as a cofinite subset of $\mathbb{N}\cup\{0\}$. In particular, we study this arc when $g=s_b,\tau,\omega,$ and $\Omega$, where $s_b (n)$ is the sum of digits of $n$ when $n$ is written in base $b\geq2$, $\tau(n)$ is the number of positive divisors of $n$, $\omega(n)$ is the number distinct prime divisors of $n$, and $\Omega(n)$ is the number of prime divisors of $n$ counted with multiplicity. Therefore the existence of our $g$-arc from $n$ to $u$ depends not only on the integers $n$ and $u$, but also on the property of the arithmetic function $g$. 
	
	We will propose some possible research problems on $g$-arc between integers at the end of this article too. Nevertheless, we do not claim that they are difficult or interesting. They may be unimportant and may even be trivial. However, we would merely like to record them for ourselves and share them among interested readers. For more advanced graph-theoretic problems, see for examples in the books by Bollob\'{a}s \cite{Bollobas} and Bondy and Murty \cite{Bondy}.
	
	\section{Main Results}
	
	Recall that if $b\geq2$ and $n\geq1$ are integers and 
	\begin{equation*}
		n=(a_ka_{k-1}\cdots a_0)_b=\sum_{i=0}^k a_ib^i
	\end{equation*}
	is the expansion of $n$ in base $b$ where $a_k\neq0$ and $0\leq a_i<b$ for all $i$, then we define the sum of digits of $n$ in base $b$ by $s_b(n)=a_1+a_2+\cdots+a_k$. So, for example, $s_{10}(123)=6$ and $s_5(123)=s_5((443)_5))=11$. We have the following result for $s_b$-arc.
	
	\begin{theorem}\label{thm1}
		Let $b\geq2$, $n\geq1$, and $d=\gcd(b-1,n)$. Then the following statements hold.
		\begin{itemize}
			\item [\rm{(i)}]\label{thm1(i)} $\Out{s_b,n}\subseteq \{u\in\mathbb{N} \mid u\equiv 0\Mod{d}\}$.
			\item [\rm{(ii)}] $\Out{s_b,n}$ is cofinite if and only if $d=1$.
		\end{itemize}
		
	\end{theorem}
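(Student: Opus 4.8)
The whole argument rests on the classical congruence $N\equiv s_b(N)\Mod{b-1}$, immediate from $b^i\equiv 1\Mod{b-1}$ for every $i\ge 0$. For (i), if $u\in\Out{s_b,n}$ then $s_b(N)=u$ for some $N$ with $n\mid N$; since $d\mid b-1$ and $d\mid n$, reducing the congruence modulo $d$ and using $d\mid N$ gives $u=s_b(N)\equiv N\equiv 0\Mod d$, which is the claimed inclusion. For the forward implication of (ii), if $d\ge 2$ then (i) yields $\Out{s_b,n}\subseteq\{u:d\mid u\}$, whose complement already contains all $u\equiv 1\Mod d$, so the set is not cofinite. The real content is the converse, so assume $d=1$; I must show every sufficiently large $u$ equals $s_b(N)$ for some multiple $N$ of $n$.

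First I would reduce to the case $\gcd(n,b)=1$. Factor $n=n_1n_2$ with $n_1$ supported on the primes dividing $b$ and $\gcd(n_2,b)=1$; then $n_1\mid b^t$ for $t$ large, so from any multiple $M$ of $n_2$ the number $Mb^t$ is a multiple of $n$ with the same digit sum as $M$ (only trailing zeros are appended). Hence it suffices to produce, for each large $u$, a multiple of $m:=n_2$ with digit sum $u$, and I note $\gcd(m,b-1)=1$ since $m\mid n$ and $d=1$.

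For this I would use only integers whose base-$b$ digits are $0$ or $1$, so that the digit sum equals the number of ones. Let $h$ be the multiplicative order of $b$ modulo $m$ and $r_j\equiv b^j\Mod m$. Selecting $c_j\ge 0$ exponents from each residue class $j$ modulo $h$ (infinitely many are available) produces such an integer $M$ with $s_b(M)=\sum_jc_j$ and $M\equiv\sum_jc_jr_j\Mod m$; thus the attainable digit sums are exactly the values $\sum_jc_j$ with $\sum_jc_jr_j\equiv 0\Mod m$. The pivotal fact is $\sum_{j=0}^{h-1}r_j\equiv R_h\equiv 0\Mod m$, where $R_h=(b^h-1)/(b-1)$, valid because $m\mid b^h-1$ and $\gcd(b-1,m)=1$; appending a high block of $h$ consecutive powers therefore raises the digit sum by $h$ while preserving divisibility, so the attainable set is closed under $+h$. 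It remains to meet every class modulo $h$, for which I would examine the submonoid of $\mathbb{Z}/h\times\mathbb{Z}/m$ generated by the vectors $(1,r_j)$: being a submonoid of a finite group it is in fact a subgroup, it contains $(0,r_1-r_0)=(0,b-1)$ which generates $\{0\}\times\mathbb{Z}/m$ because $\gcd(b-1,m)=1$, and then $(1,1)-(0,1)=(1,0)$ generates the remaining factor. Hence $(\rho,0)$ is attained for every $\rho$, and combined with closure under $+h$ all large $u$ occur.

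The main obstacle is exactly this converse: part (i) only constrains digit sums modulo $d$, and upgrading the mere absence of an obstruction into the presence of every large value requires the explicit $0/1$ construction above. The hypothesis $d=1$ enters decisively in generating $\{0\}\times\mathbb{Z}/m$ from $(0,b-1)$; when $\gcd(b-1,n)>1$ the analogous monoid misses precisely the residue classes already excluded by (i), which is what makes the two halves of (ii) match.
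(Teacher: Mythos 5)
Your proof is correct, and for the substantive direction of (ii) it takes a genuinely different route from the paper. Part (i) and the easy direction of (ii) are identical to the paper's argument (the congruence $s_b(N)\equiv N\Mod{b-1}$). For the converse, the paper also first isolates the part of $n$ supported on primes dividing $b$ (via a factor $b^c$ in its constructions, playing the same role as your reduction to $\gcd(m,b)=1$), but then it builds two explicit multiples $A$ and $B$ of $n$ out of blocks of the form $b^{j\phi(m)}$, computes $s_b(A)=n$ and $s_b(B)=n+b-1$ (after a short case analysis to rule out carrying), observes that $\gcd(n,n+b-1)=\gcd(n,b-1)=1$, and concatenates copies of $A$ and $B$; the two-generator Frobenius bound then gives every $u\ge (n-1)(n+b-2)$ explicitly. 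You instead work with $0/1$-digit numbers, use the repunit fact $m\mid (b^h-1)/(b-1)$ (where $h=\mathrm{ord}_m(b)$) to get closure of the attainable digit sums under $+h$, and run a submonoid-equals-subgroup argument in $\mathbb{Z}/h\times\mathbb{Z}/m$, where the hypothesis $d=1$ enters through the element $(0,b-1)$ generating $\{0\}\times\mathbb{Z}/m$ --- structurally the same place coprimality enters the paper's $\gcd(n,n+b-1)=1$. The paper's approach buys an explicit, effective Frobenius-type threshold (relevant to its Question 4), while yours is cleaner conceptually, avoids the carrying analysis entirely, and makes transparent why the excluded residues in (i) are exactly what the monoid misses when $d>1$. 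One small point to tidy up: your step extracting $(0,r_1-r_0)$ presupposes $h\ge 2$; when $h=1$ the condition $\gcd(m,b-1)=1$ forces $m=1$ and the claim is trivial, but this degenerate case should be noted.
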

	
	\begin{proof}
		In this proof, we write $n\rightarrow u$ instead of $n\xrightarrow{s_b}u$ for convenience. We first prove (i).
		So let $u\in\Out{s_b,n}$. Then there exists $N\in\mathbb{N}$ such that $n\mid N$ and $s_b(N)=u$. Since $b^k\equiv1\Mod{b-1}$ for every $k\in\mathbb{N}\cup\{0\}$,
		we see that $s_b(N)\equiv N\Mod{b-1}$. Since $d\mid b-1$, we obtain $u=s_b(N)\equiv N\Mod{d}$. In addition, we have $d\mid n$ and $n\mid N$, and therefore $d\mid N$ and $u\equiv N\equiv0\Mod{d}$. This proves (i). 
		
		If $d>1$, it follows immediately from (i) that $\Out{s_b,n}$ is not cofinite. So to prove (ii), it is enough to consider the case that $d=1$. If $n=1$, then for any $u\in\mathbb{N}$, we can choose $N=\sum_{i=0}^{u-1} b^i$ to obtain $n\mid N$ and $s_b(N)=u$, and so $\Out{s_b,n}=\mathbb{N}$.  Next, let 
		$n\geq2$, $b=p_1 ^{a_1}p_2 ^{a_2} \cdots p_k^{a_k}$, and $n=p_1 ^{n_1} p_2^{n_2}\cdots p_k^{n_k}m$, where $p_1,p_2,\cdots,p_k$ are distinct primes, $a_1,a_2,\cdots,a_k,m$ are positive integers, $n_1,n_2,\cdots,n_k$ are nonnegative integers, and $(m,b)=1$. If $b=2$ and $m=1$, then $n$ is a power of $2$, and for any $u\in\mathbb{N}$, we can choose $N=n\sum_{i=0}^{u-1} 2^i$ to obtain $N\equiv0\Mod{n}$ and $s_2(N)=u$. This shows that if $b=2$ and $m=1$, then $\Out{s_b,n}=\mathbb{N}$. Here we record the case that $\Out{s_b,n}=\mathbb{N}$ for future reference. We have proved that
		\begin{equation}\label{2}
			\text{$\Out{s_b,n}=\mathbb{N}$ if $n=1$ or if $b=2$ and $m=1$.}
		\end{equation}
		So from this point on, we assume that $n\geq2$ and if $b=2$ then $m>1$. Next, let
		\begin{center}
			$c=\max\limits_{1\leq j\leq k} n_j$,\quad $A=b^c\sum\limits_{j=1}^n  b^{j\phi(m)}$,
			
			$B=b^c\left(\sum\limits_{j=1}^{n-1}b^{j\phi(m)} + \left\lfloor\dfrac{b}{2}\right\rfloor b^{n\phi(m)-1} + \left\lceil\dfrac{b}{2} \right\rceil b^{(n+1)\phi(m)-1}   \right)$,
		\end{center}
		  where $\phi$ is the Euler totient function, $\floor{x}$ is the largest integer not exceeding $x$ and $\ceil{x}$ is the smallest integer larger than or equal to $x$. We assert that $A$ and $B$ have the following properties:
		  \begin{equation}
		  	A\equiv0\Mod{n},B\equiv 0\Mod{n}, \text{ and } \gcd(s_b(A),s_b(B))=1.
		  \end{equation}  
	  	It is clear that $A$ and $B$ are divisible by $p_1^{n_1}p_2^{n_2}\cdots p_k^{n_k}$. In addition, by Euler's theorem and the fact that $(b,m)=1$, we obtain $b^{\phi(m)}\equiv1\Mod{m}$ and therefore
		\begin{equation}\label{eq2}
			A\equiv nb^c\Mod{m}\text{ and } bB\equiv b^c\left(b(n-1)   + \left\lfloor\dfrac{b}{2}\right\rfloor + \left\lceil\dfrac{b}{2} \right\rceil  \right)\Mod{m}.
		\end{equation}
		It is easy to verify that
		\begin{equation*}
			\left\lceil\dfrac{b}{2} \right\rceil + \left\lfloor\dfrac{b}{2}\right\rfloor =b.
		\end{equation*} 
		From this, \eqref{eq2}, and the fact that $m\mid n$ we obtain 
		\begin{equation}\label{eq1}
			\text{$A\equiv 0 \Mod{m}$ and $bB\equiv nb^{c+1}\equiv0\Mod{m}$.}
		\end{equation}
		Since $(b,m)=1$, we obtain $B\equiv0\Mod{m}$. Therefore both $A$ and $B$ are divisible by $p_1^{n_1}p_2^{n_2}\cdots p_k^{n_k}$ and also by $m$, so they are divisible by $n$. It remains to show that $\gcd (s_b(A),s_b(B))=1$. First, it is clear that $s_b(A)=n$. Nevertheless, we have to be more careful in the calculation of $s_b(B)$ since the term $b^{(n-1)\phi(m)}$ and $\left\lfloor b/2 \right\rfloor b^{n\phi(m)-1}$ may correspond to the same position in the $b$-adic expansion of $B$. Recall that \eqref{2} is already proved and we assume that if $b=2$, then $m>1$. Since $(m,b)=1$, we see that if $b=2$, then $m\geq3$. This implies that either $b\geq3$ or $\phi(m)>1$. If $b\geq 3$, then
		\begin{equation*}
			\text{ $1+ \left\lfloor\dfrac{b}{2}\right\rfloor\leq 1+\dfrac{b}{2}<b$, and so $s_b(B)=n-1+\left\lfloor\dfrac{b}{2}\right\rfloor+\left\lceil\dfrac{b}{2} \right\rceil=n-1+b$.}
		\end{equation*} 	
		If $\phi(m)>1$, then	
		\begin{equation*}
			\text{ $(n-1)\phi(m)<n\phi(m)-1$, and so $s_b(B)=n-1+ \left\lfloor\dfrac{b}{2}\right\rfloor+\left\lceil\dfrac{b}{2} \right\rceil  =n-1+b$.}
		\end{equation*}
		In any case, $s_b(B)=n-1+b$, and so $\gcd(s_b(A),s_b(B))=\gcd(n,n-1+b)=d=1$. This proves \eqref{eq1}.
		
		Recall that the Frobenius number of coprime positive integers $a$ and $\ell$ is 
		\[(a-1)(\ell-1)-1,\]
		 that is, if $a,\ell\in\mathbb{N}$ and $\gcd(a,\ell)=1$, then for every positive integer $n\geq(a-1)(\ell-1)$, there exist $x,y \in \mathbb{N}\cup\{0\}$, such that $n=ax+\ell y$. Let $a=s_b(A)$, $\ell=s_b(B)$. Then $a$ and $\ell$ are coprime. We assert that there is an arc from $n$ to each  positive integer $u\geq(a-1)(\ell-1)$. We first write such an integer $u=ax+\ell y$ for some $x,y\in\mathbb{N}\cup\{0\}$. Then we construct $N$ as the concatenation of $A$ and $B$ as 
		\begin{equation}\label{eq3}
			N=(AAA\cdots ABBB\cdots B)_b,
		\end{equation}
		where the number of $A$ in \eqref{eq3} is $x$ and the number of $B$ in \eqref{eq3} is $y$. Since both $A,B\equiv0\Mod{n}$, we have $N\equiv0\Mod{n}$. In addition,  
		\[s_b(N)=xs_b(A)+ys_b(B)=ax+\ell y=u.\] 
		This proves our assertion. In other words,
		\begin{equation*}
			\text{$\Out{s_b,n}$ contains every integer  $ u\geq(a-1)(\ell-1)$.} 
		\end{equation*}
		Hence $\Out{s_b,n}$ is a cofinite subset of  $\mathbb{N}$, as required. This completes the proof.
	\end{proof}
	
	Although the Frobenius number formula for two coprime positive integers that we used in the proof of Theorem \ref{thm1} is well known and is not difficult to prove, finding a general Frobenius number formula for three integers $a_1, a_2$ and, $a_3$ with $\gcd(a_1, a_2, a_3)=1$ is not easy. In fact, it was unknown for quite some time until Tripathi \cite{Tripathi,Tripathi1} studied this problem in his thesis and published the results in 2017.
	
	From Theorem \ref{thm1}, the following natural questions may occur: is the set $\Out{s_b,n}$ always infinite? In addition if $(b-1,n)=1$, we see from \eqref{2} that $\Out{s_b,n}=\mathbb{N}$ when $n=1$ or when $b=2$ and $m=1$. Can we find a simple necessary and sufficient condition for $\Out{s_b,n}=\mathbb{N}$? We have a complete answer to these questions in the next two theorems.
	
	\begin{theorem}\label{THM2}
		For integers $b\geq2$ and $n\geq1$, $\Out{s_b,n}$ is an infinite set. More precisely, $\Out{s_b,n}$ contains every multiple of $s_b(n)$.
	\end{theorem}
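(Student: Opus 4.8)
The plan is to establish the more precise statement directly, since it immediately implies infinitude. Fix a positive integer $t$; I want to produce a witness $N\in\mathbb{N}$ with $n\mid N$ and $s_b(N)=t\,s_b(n)$, which by definition places $t\,s_b(n)$ in $\Out{s_b,n}$. The natural construction, reminiscent of the concatenation idea already used in the proof of Theorem~\ref{thm1}, is to glue together $t$ copies of the base-$b$ representation of $n$.

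Writing $n=(a_k\cdots a_0)_b$ with $L=k+1$ digits, I would define
\[
N=\sum_{j=0}^{t-1} n\,b^{jL}=n\sum_{j=0}^{t-1}b^{jL}=n\cdot\frac{b^{tL}-1}{b^{L}-1},
\]
which is visibly a multiple of $n$. The one step needing a short justification is the digit-sum identity $s_b(N)=t\,s_b(n)$: the $j$-th block $n\,b^{jL}$ occupies base-$b$ positions $jL$ through $jL+k$, and since its top digit $a_k\neq0$ lies at position $(j+1)L-1$, strictly below the bottom position $(j+1)L$ of the next block, consecutive blocks never overlap and no carrying arises. Thus the base-$b$ expansion of $N$ is exactly $t$ side-by-side copies of the digit string of $n$, so its digit sum is $t\,s_b(n)$.

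This settles the theorem: every $t\,s_b(n)$ with $t\in\mathbb{N}$ lies in $\Out{s_b,n}$, and since $s_b(n)\geq1$ for $n\geq1$, these values form an infinite set. I do not anticipate any genuine difficulty here; the only place to be careful is the no-carry claim, and defining $N$ via shifts by the full digit-length $L$ (rather than attempting a tighter packing) is precisely what guarantees it.
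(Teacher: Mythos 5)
Your construction is exactly the one the paper uses: $N$ is the base-$b$ concatenation of $t$ copies of $n$, which is divisible by $n$ and has digit sum $t\,s_b(n)$. The argument is correct, and your explicit justification of the no-carry step only makes more precise what the paper leaves implicit.
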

	\begin{proof}
		If $u=ks_b(n)$ for some positive integer $k$, then the integer $N=(nn\cdots n)_b$ constructed by the concatenation of $k$ terms of $n$ satisfies $n\mid N$ and 
		\[s_b(N)=ks_b(n)=u.\] 
		So every multiple of $s_b(n)$ is contained in $\Out{s_b,n}$, as required.
	\end{proof}

	\begin{theorem}\label{THM3}
		Let $b\geq2$ and $n\geq1$ be integers. Then $\Out{s_b,n}=\mathbb{N}$ if and only if $(b-1,n)=1$ and every prime divisor of $n$ is also a divisor of $b$.
	\end{theorem}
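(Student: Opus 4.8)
The plan is to prove the two implications separately, using the factorization $n=p_1^{n_1}p_2^{n_2}\cdots p_k^{n_k}m$ with $(m,b)=1$ introduced in the proof of Theorem~\ref{thm1}; in this notation, the requirement that every prime divisor of $n$ also divides $b$ is exactly the condition $m=1$.

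For necessity, I would test the hypothesis $\Out{s_b,n}=\mathbb{N}$ against the smallest nontrivial value $u=1$. Since $1\in\Out{s_b,n}$, there is a positive integer $N$ with $n\mid N$ and $s_b(N)=1$. A positive integer whose base-$b$ digits sum to $1$ must be a single power $b^{j}$, so $n\mid b^{j}$, and this forces every prime divisor of $n$ to divide $b$, i.e.\ $m=1$. The remaining condition $(b-1,n)=1$ then follows, either directly---any common prime divisor of $b-1$ and $n$ would divide both $b$ and $b-1$, hence $1$---or at once from Theorem~\ref{thm1}(ii), since $\mathbb{N}$ is cofinite.

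For sufficiency, the hypothesis $m=1$ yields a $c\in\mathbb{N}\cup\{0\}$ with $n\mid b^{c}$: indeed $b^{c}=p_1^{ca_1}\cdots p_k^{ca_k}$, so any $c\ge\max_{1\le i\le k}\lceil n_i/a_i\rceil$ works. Given an arbitrary $u\in\mathbb{N}$, I would then set $N=b^{c}\sum_{i=0}^{u-1}b^{i}$. Multiplying by $b^{c}$ only shifts the base-$b$ expansion, so $s_b(N)=u$, while $n\mid b^{c}\mid N$; hence $u\in\Out{s_b,n}$ for every $u$, and $\Out{s_b,n}=\mathbb{N}$.

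Because the correct test value $u=1$ makes the characterization transparent, I do not expect a genuine obstacle here; the only point deserving care is the logical relation between the two stated conditions. In fact $m=1$ already forces $(b-1,n)=1$, so the equivalence could be stated with the single condition $m=1$; I would retain both to match the statement and the role of $d=\gcd(b-1,n)$ in Theorem~\ref{thm1}. I would also record the boundary case $n=1$, where $m=1$ holds vacuously and the conclusion agrees with \eqref{2}.
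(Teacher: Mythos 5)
Your proposal is correct and follows essentially the same route as the paper: necessity via the test value $u=1$ (forcing $N=b^{j}$ and hence every prime of $n$ dividing $b$), and sufficiency via the witness $N=b^{c}\sum_{i=0}^{u-1}b^{i}$ with $n\mid b^{c}$. Your added observation that the condition ``every prime divisor of $n$ divides $b$'' already implies $(b-1,n)=1$ is a correct refinement that the paper does not point out (it instead invokes Theorem~\ref{thm1} separately for that part), but it does not change the substance of the argument.
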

	
		\noindent {\it Remark.} If $n=1$, then $(b-1,n)=1$ and $n$ has no prime divisor, so it is included in the case $\Out{s_b,n}=\mathbb{N}$. Nevertheless, for clarity, if one prefer, the above theorem can be rewritten as $\Out{s_b,n}=\mathbb{N}$ if and only if (i) $n=1$ or (ii) $n\geq2$, $(b-1,n)=1$, and every prime dividing $n$ is a divisor of $b$.
	
	\begin{proof}
		We first observe that if $(b-1,n)>1$, then we obtain by Theorem \ref{thm1} that $\Out{s_b,n}\neq \mathbb{N}$. Next, we consider the case that there exists a prime $p$ dividing $n$ and $p\nmid b$. Suppose for a contradiction that $\Out{s_b,n}= \mathbb{N}$. So in particular, $1\in \Out{s_b,n}$. Then there exists $N\in\mathbb{N}$ such that $n\mid N$ and $s_b(N)=1$. But $s_b(N)=1$ implies that $N=b^\ell$ for some $\ell \in\mathbb{N}\cup\{0\}$. Since $p\mid n$ and $n\mid N$, we have $p\mid b^{\ell}$, which implies $p\mid b$, a contradiction. Hence $\Out{s_b,n}\neq\mathbb{N}$, as required.
		
		For the converse, assume that $(b-1,n)=1$ and every prime divisor of $n$ is also a divisor of $b$. Since we already proved this result when $n=1$ in \eqref{2}, we can suppose that $n\geq2$. Let $b=p_1^{b_1}p_2^{b_2}\cdots p_k^{b_k}$ where $p_1,p_2,\ldots,p_k$ are distinct primes and $b_1,b_2,\ldots,b_k$ are positive integers. Since every prime divisor of $n$ is also a divisor of $b$, we can write $n=p_1^{n_1}p_2^{n_2}\cdots p_k^{n_k}$ where $n_1,n_2,\ldots,n_k$ are nonnegative integers. To show that $\Out{s_b,n}=\mathbb{N}$, let $u\in\mathbb{N}$ be given. Let $n_0=\max_{1\leq i\leq k} n_i$ and $N=b^{n_0}\sum_{i=0}^{u-1}b^i$. Then $n\mid N$ and $s_b(N)=u$. So $n\rightarrow u$, as required. This completes the proof.
	\end{proof}
	
	Another question that may occur from Theorem \ref{thm1} is in the subset relation in (i). By Theorem \ref{THM3}, we obtain that if $(b-1,n)=1$ and every prime divisor of $n$ is a divisor of $b$, then the subset relation in (i) of Theorem \ref{thm1} becomes an equality. If $(b-1,n)>1$, is the subset relation a strict subset or an equality? The next two examples show that it may be an equality or a strict subset relation.
	
	\begin{example}	\label{Ex4}
		Let $b=10$ and $n=33$. In this case, we have $d=\gcd(b-1,n)=3$ and we will show that $u=3$ is not an element of $\Out{s_b,n}$. Suppose by way of contradiction that there exists an arc from $n$  to $3$. Then there exists $N\in\mathbb{N}$ such that $n\mid N$ and $s_{10}(N)=3$. Since $11\mid n$, we see that $11\mid N$. Since $s_{10}(N)=3$, it is easy to see that $N$ must be in the following form: 
		\begin{center}
			$N=3\times 10^a$, or $N=(2\times 10^a)+10^b$, or $N=10^a+10^b+10^c$,
		\end{center}
		 where $a,b,c\in\mathbb{N}\cup\{0\}$. Reducing these modulo $11$, we obtain
		\begin{equation*}
			N\equiv3(-1)^a, 2(-1)^a+(-1)^b, (-1)^a+(-1)^b+(-1)^c\equiv 1, -1, 3, -3 \Mod{11},
		\end{equation*}
		which contradicts the fact that $11\mid N$. Hence $\Out{s_b,n}\neq\{u\in\mathbb{N} \mid u\equiv0\Mod{d}\}$.
	\end{example}
	
	There is a situation where $(b-1,n)>1$ and the subset relation in (i) of Theorem \ref{thm1}  is in fact an equality too as shown in the next example.
	
	\begin{example}\label{Ex5}
		Let $b=10$ and $n=3$. In this case, we have $d=\gcd(b-1,n)=3$. To show that the subset relation in Theorem \ref{thm1} (i) can be replaced by an equality, let $u\equiv0\Mod{3}$. Then $u=3q$ for some $q\geq1$. Then we can choose $N=3\sum_{i=0}^{q-1}10^i$ to obtain $N\equiv0 \Mod{n}$ and $s_{10}(N)=3q=u$, as required.
	\end{example}
	
	From this, it is interesting to determine exactly when the subset relation in Theorem \ref{thm1} is strict or is an equality. See Question 1 and the list of other questions at the end of this article.
	
	Recall that for each $m\in\mathbb{N}$, the number of positive divisors of $m$ is denoted by $\tau(m)$ and the arithmetic function $\tau$ is called the divisor function. It is well known that $\tau$ is a multiplicative function, that is $\tau(1)=1$ and if $m,n\in\mathbb{N}$ and $(m,n)=1$, then $\tau(mn)=\tau(m)\tau(n)$. Next, we study some properties of the $\tau$-arc and $\Out{\tau,n}$.
	
	\begin{theorem}\label{THM6}
		Let $n$ be a positive integer. Then the following statements hold.
		\begin{itemize}
			\item [\rm{(i)}] $\Out{\tau,n}\subseteq\left\{u\in\mathbb{N}\ \vert\  u\geq \tau(n) \right\}$.
			\item [\rm{(ii)}] $\Out{\tau,n}=\mathbb{N}$ if and only if $n=1$.
			\item [\rm{(iii)}] $\Out{\tau,n}$ is a cofinite subset of $\mathbb{N}$ if and only if $n=p^k$ for some prime $p$ and nonnegative integer $k$.
			\item[\rm{(iv)}] $\Out{\tau,n}$ is always an infinite set.
		\end{itemize}
	\end{theorem}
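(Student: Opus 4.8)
The plan is to establish the four parts in order, letting each feed the next. For (i) I would argue straight from the definition: if $u \in \Out{\tau,n}$ then some multiple $N$ of $n$ has $\tau(N) = u$, and since $n \mid N$ every divisor of $n$ is also a divisor of $N$, so the divisors of $n$ form a subset of those of $N$ and $\tau(n) \leq \tau(N) = u$. For (ii) the forward direction is then immediate: if $\Out{\tau,n} = \mathbb{N}$ then $1 \in \Out{\tau,n}$, so (i) forces $\tau(n) \leq 1$, whence $\tau(n) = 1$ and $n = 1$. The converse is a direct construction: given $n = 1$ and any $u \in \mathbb{N}$, the choice $N = 2^{u-1}$ satisfies $1 \mid N$ and $\tau(N) = u$, so $\Out{\tau,1} = \mathbb{N}$.

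For (iii) I would prove the two implications separately. For the easy direction, suppose $n = p^k$. Then for every $u \geq k+1$ the integer $N = p^{u-1}$ is a multiple of $n$ (since $u - 1 \geq k$) with $\tau(N) = u$, so $\Out{\tau,n}$ contains every integer $u \geq k+1$ and is therefore cofinite; the case $k = 0$ is just (ii). The substantive direction is the contrapositive: assuming $n$ is not a prime power, I must show $\Out{\tau,n}$ is not cofinite. Writing $n = p_1^{a_1}\cdots p_r^{a_r}$ with $r \geq 2$ and each $a_i \geq 1$, any multiple $N$ of $n$ factors as $N = p_1^{b_1}\cdots p_r^{b_r}M$ with $b_i \geq a_i \geq 1$ and $\gcd(M, p_1 \cdots p_r) = 1$, so by multiplicativity $\tau(N) = (b_1+1)\cdots(b_r+1)\tau(M)$. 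Since each factor $b_i + 1 \geq 2$, the number $\tau(N)$ is a product of at least two integers exceeding $1$ and is therefore composite. Hence $\tau(N)$ is never prime, no prime lies in $\Out{\tau,n}$, and as there are infinitely many primes, $\Out{\tau,n}$ omits infinitely many values and cannot be cofinite.

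For (iv) I would give one construction valid for every $n$, paralleling Theorem \ref{THM2}: fix a prime $q \nmid n$ and, for each $j \geq 0$, put $N = nq^j$. Then $n \mid N$, and since $\gcd(n, q^j) = 1$ multiplicativity gives $\tau(N) = \tau(n)(j+1)$; as $j$ ranges over the nonnegative integers these values are distinct, so $\Out{\tau,n}$ contains every positive multiple of $\tau(n)$ and is infinite. The only step that needs an idea rather than a routine divisor count or an explicit choice of $N$ is the hard direction of (iii): the observation that every multiple of a non-prime-power $n$ has a composite number of divisors, which identifies exactly the obstruction, namely the primes, that keeps $\Out{\tau,n}$ from being cofinite.
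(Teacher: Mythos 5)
Your proposal is correct and follows essentially the same route as the paper: part (i) by divisor counting, part (ii) from (i) plus the construction $N=p^{u-1}$, part (iii) via the key observation that a prime value of $\tau(N)$ forces $N$ (hence $n$) to be a prime power together with the infinitude of primes (you phrase this contrapositively, the paper directly), and part (iv) by multiplying $n$ by coprime factors. The only cosmetic difference is in (iv), where you use $N=nq^{j}$ to get all multiples $\tau(n)(j+1)$ while the paper multiplies by $\ell$ distinct new primes to get $2^{\ell}\tau(n)$; both suffice.
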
  
	\begin{proof}
		For (i), let $u\in\Out{\tau,n}$. Then there exists $N\in\mathbb{N}$ such that $n\mid N$ and $\tau(N)=u$. Since $n\mid N$, every divisor of $n$ is also divisor of $N$. Therefore $\tau(n)\leq\tau(N)=u$, as required.
		
		For (ii), if $n=1$ and $u$ is a positive integer, then we can choose $N=p^{u-1}$ to obtain $n\mid N$ and $\tau(N)=u$, and so $\Out{\tau,n}=\mathbb{N}$. We observe that $\tau(n)=1$ if and only if $n=1$. So if $\Out{\tau,n}=\mathbb{N}$, then we obtain from (i) that $\tau(n)=1$, which implies $n=1$.
		
		For (iii), assume that $\Out{\tau,n}$ is cofinite. Since the number of primes is infinite, there exists a prime number $u\in \Out{\tau,n}$. So there is $N\in\mathbb{N}$ such that $n\mid N$ and $\tau(N)=u$. Let $N=p_1^{a_1}p_2^{a_2}\cdots p_k^{a_k}$ where $a_1,a_2,\ldots,a_k$ are positive integers and $p_1,p_2,\ldots,p_k$ are distinct primes. By the well known formula for $\tau(N)$, we obtain
		\begin{equation}\label{eq31}
			u=\tau(N)=(a_1+1)(a_2+1)\cdots(a_k+1).
		\end{equation} 
		Since $u$ is a prime, the only positive divisors of $u$ is $1$ and $u$. Therefore \eqref{eq31} implies that $k=1$ and $a_1=u-1$. So $N=p_1^{u-1}$. Since $n\mid N$, we see that $n=p_1^k$ for some nonnegative integer $k\leq u-1.$
		Next, let $n=p^k$ where $p$ is a prime and $k$ is a nonnegative integer. If $k=0$, then the result follows from (ii). So suppose $k\geq 1$. We assert that
		\begin{equation}\label{100}
			\Out{\tau,n}=\left\{u\in\mathbb{N}\ \vert\ u\geq k+1\right\}.
		\end{equation}
		By (i), we see that $\Out{\tau,n}$ is a subset of the set on the right-hand side of \eqref{100}. Next, $u\in\mathbb{N}$ and $u\geq k+1$. Let $N=p^{u-1}$. Then $n\mid N$ and $\tau(N)=u$. So $u\in\Out{\tau,n}$ and \eqref{100} is proved. Therefore $\Out{\tau,n}$ is cofinite. This proves (iii).
				
		Next, we prove (iv). If $n=1$, then (iv) follows from (ii). So let $n>1$ and write $n=p_1^{a_1}p_2^{a_2}\cdots p_k^{a_k}$ where $a_1,a_2,\ldots,a_k$ are positive integers and $p_1,p_2,\ldots,p_k$ are distinct primes. We assert that the integer $u$ defined by
		\begin{equation*}
			u=u_{\ell}=2^{\ell}(a_1+1)(a_2+1)\cdots(a_k+1)
		\end{equation*}
		 is an element of $\Out{\tau,n}$ for every $\ell\in\mathbb{N}$. Let $q_1,q_2,\ldots,q_{\ell}$ be distinct primes and different from $p_1,p_2,\ldots,p_k$. Let $m=\prod_{i=1}^{\ell}q_i$ and $N=mn$. Then $n\mid N$, and since $(m,n)=1$ and $\tau$ is multiplicative, we also obtain \[\tau(N)=\tau(m)\tau(n)=2^{\ell}(a_1+1)(a_2+1)\cdots(a_k+1)=u_{\ell}.\] 
		 So $u_{\ell}\in\Out{\tau,n}$. 
		 Since $\ell$ is arbitrary, $\Out{\tau,n}$ is infinite. This completes the proof.
	\end{proof}
	
	Considering (i) of Theorem \ref{THM6}, it is natural to ask whether the subset relation can be replaced by an equality. The next theorem shows that it is an equality if and only if $n=1$ or $n$ is a prime power.
	
	\begin{theorem}
		Let $n$ be a positive integer. Then $\Out{\tau,n}=\left\{u\in\mathbb{N}\ \vert\ u\geq \tau(n)\right\}$ if and only if $n=p^k$ for some prime $p$ and nonnegative integer $k$.
	\end{theorem}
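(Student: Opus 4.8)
The plan is to prove both implications by leaning on the facts already established in Theorem \ref{THM6}. Since Theorem \ref{THM6}(i) gives the inclusion $\Out{\tau,n}\subseteq\{u\in\mathbb{N}\mid u\geq\tau(n)\}$ for every $n$, the whole content of the statement is the question of \emph{when} this inclusion is an equality. This reframing immediately splits the work into an easy direction and a substantive one.

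For the direction assuming $n=p^k$, I would simply quote the computation already carried out inside the proof of Theorem \ref{THM6}(iii). When $k\geq 1$, equation \eqref{100} asserts $\Out{\tau,p^k}=\{u\in\mathbb{N}\mid u\geq k+1\}$, and since $\tau(p^k)=k+1$ this is exactly $\{u\mid u\geq\tau(n)\}$. The remaining case $k=0$, i.e.\ $n=1$, follows from Theorem \ref{THM6}(ii) together with $\tau(1)=1$. So this implication costs no new work.

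The substantive direction is the converse, which I would prove by contraposition: assuming $n$ is not a prime power, I must exhibit some $u\geq\tau(n)$ with $u\notin\Out{\tau,n}$. The key observation, already exploited in the proof of part (iii), is that a prime value of $\tau$ forces a prime-power preimage. Concretely, if a prime $u$ belonged to $\Out{\tau,n}$, there would be an $N$ with $n\mid N$ and $\tau(N)=u$; writing $N=p_1^{a_1}\cdots p_r^{a_r}$ and using $u=(a_1+1)\cdots(a_r+1)$, the primality of $u$ forces $r=1$ and $N=p_1^{u-1}$. Then $n\mid N$ would make $n$ a power of $p_1$, contradicting the assumption that $n$ has at least two distinct prime divisors. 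Hence no prime can lie in $\Out{\tau,n}$.

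To finish, I would invoke the infinitude of the primes to choose a prime $u\geq\tau(n)$. Such a $u$ belongs to the right-hand set $\{u\mid u\geq\tau(n)\}$ but, by the previous paragraph, not to $\Out{\tau,n}$, so the inclusion is strict. I do not expect a serious obstacle: the only points requiring care are ensuring the chosen prime witness actually sits at or above the threshold $\tau(n)$ (guaranteed by the infinitude of primes) and recognizing that the prime-value rigidity of $\tau$ is precisely what excludes every $n$ that is not a prime power.
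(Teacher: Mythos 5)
Your proof is correct, but the converse direction takes a genuinely different route from the paper's. Where you exclude a \emph{prime} witness $u\geq\tau(n)$ by reusing the prime-rigidity of $\tau$ (a prime value of $\tau$ forces a prime-power preimage, which $n\mid N$ then transfers to $n$), the paper instead shows that the specific value $u=\tau(n)+1$ is missing: writing $u=\tau(nq)$, it splits into the case $(q,n)=1$, where multiplicativity gives $\tau(nq)\geq 2\tau(n)>\tau(n)+1$, and the case $d=(q,n)>1$, where the divisor-count formula gives $\tau(nq)\geq\tau(nd)\geq\tau(n)\left(1+\frac{1}{n_j+1}\right)>\tau(n)+1$. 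Your argument is shorter and is really just the observation that equality with $\{u\mid u\geq\tau(n)\}$ would make $\Out{\tau,n}$ cofinite, so Theorem \ref{THM6}(iii) already forces $n$ to be a prime power; indeed you could cite (iii) directly rather than re-deriving its key step. What it buys in addition is that \emph{every} prime $\geq\tau(n)$ is absent from $\Out{\tau,n}$, so infinitely many elements are missing. What the paper's computation buys instead is a concrete small witness, showing the inclusion already fails at the very first integer above the threshold $\tau(n)$, which is sharper local information. The easy direction ($n=p^k$ implies equality, via \eqref{100} and part (ii)) is handled identically in both.
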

	\begin{proof}
		If $n=1$, then the result follows from Theorem \ref{THM6}. If $n=p^k$ where $p$ is a prime and $k\in\mathbb{N}$, then we already proved it in \eqref{100}. So the converse of this theorem holds. Next, suppose $n\neq 1$ and $n$ is not a prime power. Then $n=p_1^{n_1}p_2^{n_2}\cdots p_k^{n_k}$ where $k\geq 2$, $n_1,n_2,\ldots,n_k$ are positive integers and $p_1, p_2,\ldots,p_k$ are distinct primes. We only need to find $u\in\mathbb{N}$ such that $u\geq \tau(n)$ and $u\notin\Out{\tau,n}$. Suppose, by way of contradiction, that $u=\tau(n)+1$ is an element of $\Out{\tau,n}$. Then there exists $q\in\mathbb{N}$ such that $\tau(nq)=u$. Clearly $q\geq 2$. So if $(q,n)=1$, then $u=\tau(n)\tau(q)\geq 2\tau(n)>\tau(n)+1$, which is not the case. So $(q,n)> 1$. Let $d=(q,n)$. Since $d>1$ and $d\mid n$, we write $d=p_1^{d_1}p_2^{d_2}\cdots p_k^{d_k}$ where $d_1, d_2,\ldots, d_k$ are nonnegative integers and there is at least one $j=1,2,\ldots, k$ such that $d_j\geq 1$. Since $nd\mid nq$, we see that $\tau(nd)\leq\tau(nq)$. Therefore
		\begin{align*}
			u=\tau(nq)\geq\tau(nd)&=\prod\limits_{i=1}^{k}\left(n_i+d_i+1\right)\\
			&=\prod\limits_{i=1}^{k}\left(n_i+1\right)\prod\limits_{i=1}^{k}\left(1+\dfrac{d_i}{n_i+1}\right)\\
			&=\tau(n)\prod\limits_{i=1}^{k}\left(1+\dfrac{d_i}{n_i+1}\right)\\
			&\geq\tau(n)\left(1+\dfrac{d_j}{n_j+1}\right)\\
			&\geq\tau(n)\left(1+\dfrac{1}{n_j+1}\right)
			>\tau(n)\left(1+\dfrac{1}{\tau(n)}\right)=\tau(n)+1,
		\end{align*}
	which is a contradiction. Hence $u=\tau(n)+1$ is not an element  of $\Out{\tau,n}$. This  completes the proof.
	\end{proof}
	
	Next, we give some results on $\Out{\omega,n}$ and $\Out{\Omega,n}$.
	
	\begin{theorem}\label{thm7}
		The set $\Out{\omega,n}$ is cofinite for every $n\in\mathbb{N}$. More precisely, we have
		\begin{equation} \label{31}
			\Out{\omega,n}=\left\{u\in\mathbb{N}\ \vert\ u\geq \omega(n)\right\}.
		\end{equation}
		Consequently, $\Out{\omega,n}=\mathbb{N}$ if and only if $n=p^k$ for some prime $p$ and integer $k\geq0$.
	\end{theorem}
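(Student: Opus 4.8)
The plan is to prove the exact formula \eqref{31} directly, because both the cofiniteness claim and the final ``$=\mathbb{N}$'' characterization drop out of it immediately. I would split the proof into the two inclusions defining \eqref{31} and then read off the consequences.

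First I would handle the inclusion $\Out{\omega,n}\subseteq\{u\in\mathbb{N}\mid u\geq\omega(n)\}$, which is the easy direction and parallels part (i) of Theorem \ref{THM6}. If $u\in\Out{\omega,n}$, pick $N\in\mathbb{N}$ with $n\mid N$ and $\omega(N)=u$; then every prime dividing $n$ also divides $N$, so $\omega(n)\leq\omega(N)=u$.

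Second, for the reverse inclusion I would argue by an explicit construction. Write $n=p_1^{a_1}\cdots p_r^{a_r}$ with $r=\omega(n)$, reading $n=1$ as the empty product with $r=0$. Given any integer $u\geq r$, choose $u-r$ distinct primes $q_1,\ldots,q_{u-r}$, none equal to any $p_i$ (possible since there are infinitely many primes), and set $N=n\,q_1\cdots q_{u-r}$. Then $n\mid N$, and because the $q_j$ are prime factors not already present in $n$, we get $\omega(N)=r+(u-r)=u$, so $u\in\Out{\omega,n}$. Letting $u$ range over all integers in $\mathbb{N}$ with $u\geq r$ gives $\{u\in\mathbb{N}\mid u\geq\omega(n)\}\subseteq\Out{\omega,n}$, establishing \eqref{31}.

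Finally I would read off the consequences. The set $\{u\in\mathbb{N}\mid u\geq\omega(n)\}$ omits only the finitely many integers $1,2,\ldots,\omega(n)-1$, hence is cofinite; and it equals $\mathbb{N}$ exactly when $\omega(n)\leq 1$, that is, when $n=1$ or $n$ is a prime power, which is precisely the stated condition $n=p^k$ with $k\geq 0$. As for difficulty, there is no real obstacle here: the only point needing a little care is the bookkeeping around $n=1$, where $\omega(n)=0$ and the empty-product convention applies, together with the convention that $\mathbb{N}$ begins at $1$ so that $u=0$ never arises; this is what makes the formula and the ``$=\mathbb{N}$'' characterization line up.
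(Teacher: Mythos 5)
Your proposal is correct and follows essentially the same route as the paper: the lower-bound inclusion via $n\mid N\Rightarrow\omega(n)\leq\omega(N)$, the reverse inclusion by multiplying $n$ by $u-\omega(n)$ new primes, and the consequences read off from $\omega(n)\leq 1$. The only cosmetic difference is that you absorb the case $u=\omega(n)$ into the empty-product convention, whereas the paper treats it separately by taking $N=n$.
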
 
	
	\begin{proof}
		To prove \eqref{31}, let $u\in \Out{\omega,n}$. Then there exists $N\in\mathbb{N}$ such that $n\mid N$ and $\omega(N)=u$. Since $n\mid N$, we have $u=\omega(N)\geq \omega(n)$. Next, suppose $u\geq\omega(n)$. If $u=\omega(n)$, then we can choose $N=n$ to obtain $n\mid N$ and $\omega(N)=u$. So suppose $u=\omega(n)+\ell$ for some $\ell \in\mathbb{N}$. Since the number of primes is infinite, there are primes $p_1>p_2>\cdots>p_{\ell}$ that are not the divisors of $n$. Let $N=np_1 p_2 \cdots p_{\ell}$. Then $n\mid N$ and $\omega(N)=\omega(n)+\ell=u$. This proves \eqref{31}. If $n=p^k$ where $p$ is a prime and $k$ is a nonnegative integer, then $\omega(n)=0$ or $\omega(n)=1$, so we obtain from \eqref{31} that $\Out{\omega,n}=\mathbb{N}$. If $n\neq1$ and $n$ is not a prime power, then $\omega(n)\geq2$, and we obtain from \eqref{31} that $\Out{\omega,n}$ is not $\mathbb{N}$. So the proof is complete.  
	\end{proof}
	
	\begin{theorem}
		For each $n\in\mathbb{N}$, we have
		\begin{equation}\label{32}
			\Out{\Omega,n}= \left\{u\in\mathbb{N}\ \vert\ u\geq \Omega(n)\right\}.
		\end{equation}	
		Consequently, $\Out{\Omega,n}$ is cofinite for every $n\in\mathbb{N}$, and $\Out{\Omega,n}=\mathbb{N}$ if and only if $n=1$ or $n$ is a prime.
	\end{theorem}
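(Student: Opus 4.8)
The plan is to mirror the proof of Theorem \ref{thm7}, exploiting the fact that $\Omega$ is \emph{completely} additive, meaning $\Omega(ab)=\Omega(a)+\Omega(b)$ for all $a,b\in\mathbb{N}$. This additivity holds for every pair of arguments, not merely coprime ones as with $\omega$, and it is what makes both inclusions in \eqref{32} immediate. So I would organize the proof as two inclusions followed by the two consequences.

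First I would prove $\Out{\Omega,n}\subseteq\{u\in\mathbb{N}\mid u\geq\Omega(n)\}$. Given $u\in\Out{\Omega,n}$, there is $N\in\mathbb{N}$ with $n\mid N$ and $\Omega(N)=u$; writing $N=nM$ and applying complete additivity gives $u=\Omega(N)=\Omega(n)+\Omega(M)\geq\Omega(n)$, since $\Omega(M)\geq 0$. For the reverse inclusion, I would take any $u\geq\Omega(n)$ and set $\ell=u-\Omega(n)\geq 0$. Choosing $N=2^{\ell}n$ (any fixed prime serves in place of $2$) yields $n\mid N$ and $\Omega(N)=\Omega(n)+\ell=u$, so $u\in\Out{\Omega,n}$. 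Notice that this single construction already covers the boundary case $\ell=0$ (i.e. $u=\Omega(n)$, where one may simply take $N=n$), so no separate argument is needed there. This is the one place where $\Omega$ is more convenient than $\omega$: because multiplicity is counted, I can raise a single prime to a high power rather than having to exhibit $\ell$ \emph{distinct} new primes as in the proof of \eqref{31}.

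Finally, the two stated consequences follow from \eqref{32} by inspecting the complement. The set $\{u\in\mathbb{N}\mid u\geq\Omega(n)\}$ omits only the finitely many integers $1,2,\ldots,\Omega(n)-1$, so $\Out{\Omega,n}$ is cofinite for every $n$. Moreover it equals $\mathbb{N}$ exactly when $\Omega(n)\leq 1$, that is, when $\Omega(n)=0$, which forces $n=1$, or $\Omega(n)=1$, which forces $n$ to be prime; conversely, if $\Omega(n)\geq 2$ then $1\notin\Out{\Omega,n}$.

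I do not expect any genuine obstacle in this argument: the complete additivity of $\Omega$ reduces the construction to multiplying $n$ by a prime power, and the rest is bookkeeping. The only points demanding a little care are the degenerate values $\Omega(n)=0,1$ in the final equivalence, and confirming that $u$ ranges over $\mathbb{N}=\{1,2,3,\ldots\}$ is consistent with allowing $\ell=0$; both are handled by the construction $N=2^{\ell}n$ above.
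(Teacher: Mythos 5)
Your proof is correct and takes essentially the same approach as the paper's: both inclusions of \eqref{32} are established in the same way, and the consequences are read off from the formula identically. The only difference is your witness $N=2^{\ell}n$ for the reverse inclusion, which exploits the complete additivity of $\Omega$ to use a single prime power where the paper multiplies $n$ by $\ell$ distinct primes not dividing $n$; both constructions work equally well.
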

	
	\begin{proof}
		The proof is similar to that of Theorem \ref{thm7}, so we skip some details. If $u\in\Out{\Omega,n}$, then there exists $N\in\mathbb{N}$ such that $n\mid N$ and $\Omega(N)=u$, and so $u=\Omega(N)\geq\Omega(n)$. Next, let $u\geq\Omega(n)$. If $u=\Omega(n)$, then we can choose $N=n$; if $u=\Omega(n)+\ell$ for some $\ell\in\mathbb{N}$, then we choose $N=np_1p_2\cdots p_{\ell}$ where $p_1,p_2,\ldots,p_{\ell}$ are distinct primes that does not divide $n$, which leads to the conclusion that $u\in\Out{\Omega,n}$. This proves \eqref{32}. We observe that $\Omega(n)\in\left\{0,1\right\}$ if and only if $n=1$ or $n$ is a prime. Then the other part of this theorem follows in the same way as in the proof of Theorem \ref{thm7}. So the proof is complete.
	\end{proof}

	\section{Comments and Some Open Questions}
	In this section, we give a list of some open problems. However, we do not claim that they are difficult or interesting. They may be unimportant or may even be trivial. However, we would merely like to record them for ourselves and share them among interested readers.
	
	{\bf Question 1} Let $b\geq 2$, $n\geq 1$, and $d=(b-1,n)$. Can one find a necessary and sufficient condition for the equality
		\begin{equation}\label{eqQ}
		\Out{s_b,n}= \left\{u\in\mathbb{N}\ \vert\ u\equiv 0\Mod{d}\right\}?
	\end{equation}	
	When $d=1$, we obtain such a condition in Theorem \ref{THM3}, but when $d>1$, the problem is open. By Theorems \ref{thm1} and \ref{THM2}, it is easy to see that if $d$ is divisible by $s_b(n)$, then \eqref{eqQ} holds. Is the divisibility $s_b(n)\mid d$ also a necessary condition for \eqref{eqQ}?
	
	{\bf Question 2} By Examples \ref{Ex4} and \ref{Ex5}, we know that \eqref{eqQ}  may or may not hold. In the case that $d>1$ and $\Out{s_b,n}$ is not equal to the set on the right-hand side of \eqref{eqQ}, can one completely determine the elements of $\Out{s_b,n}$? 
	
	{\bf Question 3} We obtain in Theorem \ref{THM2} that $\Out{s_b,n}$ contains every multiple of $s_b(n)$. Does $\Out{s_b,n}$ contain an infinite number of positive integers that are not divisible by $s_b(n)$? When $d=1$ and $s_b(n)\geq 2$, the answer is yes. Can we determine a simple necessary and sufficient condition for the infinitely many elements in $A\cap\Out{s_b,n}$ where $A$ is the set of positive integers that are not divisible by $s_b(n)$? In that case, does $\Out{s_b,n}$ has a natural density?
	
	{\bf Question 4} For a cofinite proper subset $A$ of $\mathbb{N}$, we call the largest integer in $\mathbb{N}\setminus A$ the Frobenius number of $A$. By Theorems \ref{thm1} and \ref{THM3}, we know that $\Out{s_b,n}$ is a cofinite proper subset of $\mathbb{N}$ when $(b-1,n)=1$ and there exists a prime $p$ that divides $n$ but does not divide $b$. Can one determine the Frobenius number of $\Out{s_b,n}$ in this case? The reader can find more information on numerical sets an Frobenius numbers, for example in the book by Alfons\'{i}n \cite{Alfonsin} and in the introduction of the article by Guhl et al. \cite{Guhl}.
	
	{\bf Question 5} We obtain some basic properties of $\Out{g,n}$ where $g=s_b, \tau, \omega, \Omega$ but we do not have any result on the set of $u\in\mathbb{N}$ that there exists an arc from $u$ to $n$. So for each arithmetic function $g$ and $n\in\mathbb{N}$, let
	\[\text{In}(g,n)= \left\{u\in\mathbb{N}\ \vert\ u\xrightarrow{g}n\right\}.\]
	What are the properties of $\text{In}(g,n)$? Is $\text{In}(g,n)$ infinite? Can one determine the set of all positive integers $n$ such that $\text{In}(g,n)=\mathbb{N}$ or a cofinite subset of $\mathbb{N}$? Does such an $n$ exist?
	
	{\bf Question 6} We can extend the $g$-arc from the multiple of $n$ to an arithmetic progression $r\Mod{n}$ by defining that there exists a $(g,r)$-arc form $n$ to $u$ if there exists $N\in\mathbb{N}$ such that $N\equiv r\Mod{n}$ and $g(N)=u$. Let
	\[\Out{g,n,r}=\{u\in\mathbb{N}\mid \text{ there exists a } (g,r)\text{-arc from } n  \text{ to } u\},\]
	\[\text{In}(g,n,r)=\{u\in\mathbb{N}\mid \text{ there exists a } (g,r)\text{-arc from } u \text{ to } n\}.\]
	So if $r=0$, then this is the $g$-arc that we study in this article. When $0<r<n$, what are the results analogous to our theorems?
	
	{\bf Question 7} We may consider the $g$-arc from $n$ to $\infty$ too. A sequence $(a_m)_{m\geq 1}$ is a $g$-arc from $n$ to $\infty$ if $a_1=n$, $a_m\rightarrow\infty$ as $m\rightarrow\infty$, and there exists a $g$-arc from $a_m$ to $a_{m+1}$ for every $m\geq 1$. Nevertheless, without any restriction on the number of steps, this problem may not be interesting. We say that there exists a $k$-bounded $g$-arc from $n$ to $u$ if there exists $N\in\mathbb{N}$ such that $N\equiv 0\Mod{n}$, $N\leq kn$, and $g(N)=u$. A sequence $(a_m)_{m\geq 1}$ is a $k$-bounded $g$-arc from $n$ to $\infty$ if $a_1=n$, $a_m\rightarrow\infty$ as $m\rightarrow\infty$, and there exists a $k$-bounded $g$-arc from $a_m$ to $a_{m+1}$ for every $m\in\mathbb{N}$. What are the results corresponding to our theorems for $k$-bounded $g$-arc? If $n$ and $k$ are given, does there exist a $k$-bounded $g$-arc from $n$ to $\infty$ when $g=s_b, \tau, \omega, \Omega$, or other arithmetic functions?
	
	{\bf Question 8} We say that $n$ and $u$ are $g$-friends if there exists a $g$-arc from $n$ to $u$ and there exists a $g$-arc from $u$  to $n$. For each $g=s_b,\tau,\omega,\Omega$, can we determine all pairs $(n,u)$ which are $g$-friends?
	
	{\bf Question 9} A triangle in a directed graph $G=(V,E)$ is a triple $(v_1,v_2,v_3)$ such that there exist directed edges from $v_1$ to $v_2$, $v_2$ to $v_3$, and $v_3$ to $v_1$. If $n\geq 3$, an $n$-polygon in $G$ is an $n$-tuple $(v_1,v_2,\ldots,v_n)$ such that there are directed edges connecting from $v_n$ to $v_1$ and from $v_i$ to $v_{i+1}$ for each $i=1,2,\ldots, n-1$. Suppose $g$ is an arithmetic function, $V=\mathbb{N}$, and there exists a directed edge from $n$ to $u$ if there exists a $g$-arc from $n$ to $u$. Are there infinitely many triangles or $n$-polygons in $G$? 
	
	{\bf Question 10} Let $e\geq 1$ and $b\geq 2$ be integers, and let $S_{e,b}:\mathbb{N}\rightarrow\mathbb{N}$ be defined as follows: if $n=(a_ka_{k-1}\cdots a_0)_b=\sum\limits_{0\leq i\leq k}a_ib^{i}$ is the $b$-adic expansion of $n$ where $a_k\neq 0$ and $0\leq a_i<b$ for all $i$, then
	\[S_{e,b}(n)=a_{k}^{e}+a_{k-1}^{e}+\cdots+a_{0}^{e}.\]
	The function $S_{e,b}$ is called an $(e,b)$-happy function, and it has been studied by many mathematicians, see for example in Guy's book \cite[Chapter E34]{Guy}, and in the articles by El-Sedy and Siksek \cite{Sedy}, Grundman and Teeple \cite{Grundman2}, Gilmer \cite{Gilmer}, Chase \cite{Chase}, Noppakaew et al. \cite{Noppakaew}, and Subwattanachai and Pongsriiam \cite{Subwattanachai}. In particular, if $e=1$, then $S_{e,b}=s_b$ is the sum of digits function. So the study of $S_{e,b}$-arc may lead to an interesting generalization of $s_b$-arc. What are the corresponding results to our theorems if we replace $s_b$ by $S_{e,b}$?
	
	\noindent {\bf Comments and Acknowledgements.} We already mentioned the work of various mathematicians on graph labelling and coprime graphs \cite{Erdos1, Erdos2, Sarkozy, Vaidya, Berliner, Lee, Berkove}, which inspired us to write this article. Another inspiration came from the last problem in Thailand Mathematical Olympiad TMO 2022 \  \cite{TMO} in which the first author participated as a teacher. After TMO 2022 ended, the first author sent a question to the third author who had heard of Gelfond's theorem on sum of digits function $s_b(n)$, which contains a hint to an answer to the problem, and we decided to extend this problem without the use of Gelfond's theorem. The interested reader can find more information about Gelfond's theorem, for example, in Morgenbesser's diploma thesis \cite{Morgen}, references therein, and many other articles in the literature.
	
	After some modification of languages, the contestants in TMO 2022 were asked about the cofiniteness of $\Out{s_{10},n}$, which are covered and extended to $\Out{s_{b},n}$ for any $b\geq 2$ and $n\geq 1$ in (ii) of Theorem \ref{thm1} in this article. We would like to thank and show the support to the organizers, teachers, students, and sponsors of TMO by writing this article. We hope that this will motivate some students to learn more about mathematics.

	Pongsriiam's research project is funded jointly by the Faculty of Science Silpakorn University and the National Research Council of Thailand (NRCT), grant number NRCT5-RSA63021-02. He is also supported by the Tosio Kato Fellowship given by the Mathematical Society of Japan during his visit at Nagoya University in July 2022 to July 2023.

\end{document}